\theoremstyle{plain} 
\newtheorem{thm}{Theorem}[section]
\newtheorem{lem}[thm]{Lemma}
\theoremstyle{proposition}
\newtheorem{pr}{Proposition}[section]
\theoremstyle{definition}
\newtheorem{defi}{Definition}[section]
\theoremstyle{remark}
\numberwithin{equation}{section}
\begin{document}
\hyphenpenalty=100000

\begin{center}
{\Large {\textbf{\\ Periodic solutions of integro-differential equations in Banach space having Fourier type}}}\\[5mm]
{\large \textbf{{Bahloul Rachid}$^\mathrm{{\bf \color{red}{1}}}$\footnote{\emph{{1} : E-mail address : bahloul33r@hotmail.com}} } \\[1mm]
{\footnotesize $^\mathrm{}$ 
      }\\[3mm]}

\end{center}

\begin{flushleft}\footnotesize \it \textbf{$^\mathrm{{\bf \color{red}{1}}}$ Department of Mathematics, Faculty of Sciences and Technology, {\bf Fez}, Morocco.
}\\[3mm]
\end{flushleft}

\begin{center}\textbf {ABSTRACT}\end{center} 
{\footnotesize {{The aim of this work is to study the existence of a periodic solutions of  integro-differential equations $\frac{d}{dt}[x(t) - L(x_{t})]= A[x(t)- L(x_{t})]+G(x_{t})+ \int_{- \infty}^{t}a(t-s)x(s)ds+f(t)$, ($0 \leq t \leq 2\pi$) with the periodic condition $x(0) = x(2\pi)$, where $a \in L^{1}(\mathbb{R}_{+})$ . Our approach is based on the  M-boundedness
of linear operators, Fourier type, $B^{s}_{p, q}$-multipliers and Besov spaces.}}}\\
\footnotesize{{\textbf{Keywords:}}  integro-differential equations, Fourier type, $B^{s}_{p, q}$-multipliers.}\\[1mm]

\afterpage{
\fancyhead{} \fancyfoot{}
\fancyfoot[R]{\footnotesize\thepage}
\fancyhead[R]{\scriptsize \it{ }
 }}

\section{Introduction}
The aim of this paper is to study the existence and of solutions
for some neutral functional integro-differential equations with delay by using methods of maximal regularity in Besov spaces.
Motivated by the fact that  neutral functional integro-differential equations  with finite delay arise in many areas of applied mathematics, this type of
equations has received much attention in recent years. In particular, the problem of existence of periodic solutions, has been considered by several authors. We refer the readers to papers [\cite{1}, \cite{7}, \cite{9}, \cite{14}] and the references listed therein for information on this subject. One of the most important tools to
prove maximal regularity is the theory of Fourier multipliers. They play an important
role in the analysis of parabolic problems. In recent years it has become apparent
that one needs not only the classical theorems but also vector-valued extensions with
operator-valued multiplier functions or symbols. These extensions allow to treat certain problems for evolution equations with partial differential operators in an elegant
and efficient manner in analogy to ordinary differential equations. For some recent papers on the subjet, we refer to Arendt-Bu \cite{1}, Bu et al \cite{11} and Hernan et al \cite{12}.\\
We characterize the existence of periodic solutions for the following integro-differential equations in vector-valued spaces and Besov. 
In the case of vector-valued space, our results involve UMD spaces, the concept of R-boundedness and a condition on the resolvent operator. We remark that many of the most powerful modern
theorems are valid in UMD spaces, i.e., Banach spaces in which martingale are unconditional differences. The probabilistic definition of UMD spaces turns out to be
equivalent to the $L^{p}$-boundedness of the Hilbert transform, a transformation which
is, in a sense, the typical representative example of a multiplier operator. On the
other hand the notion of M-boundedness has played an important role in the functional  approach to partial differential equations.\\
In the case of Besov  spaces, our results involve only boundedness of the resolvent.

In this work, we study the existence of periodic solutions for the following integro-differential equations 

\begin{equation}\label{e1}
\displaystyle{\frac{d}{dt}[x(t) - L(x_{t})]= A[x(t)- L(x_{t})]+G(x_{t})+ \int_{- \infty}^{t}a(t-s)x(s)ds+f(t)}
\end{equation}
where  $A : D(A) \subseteq  X \rightarrow X$ is a linear closed operator on  Banach space ($X, \left\|.\right\|$) and
$f \in L^{p}(\mathbb{T}, X)$ for all $p \geq 1$. For $r_{2 \pi} := 2\pi N$  ( some $N \in \mathbb{N}$) $L$ and $G$ are in $B(L^{p}([- r_{2 \pi} ,0],\ \ X);\ \ X)$  is the space of all bounded linear operators  and  $x_{t}$ is an element of $L^{p}([- r_{2 \pi}\ \ ,0],\ \ X)$  which is defined as follows $$x_{t}(\theta) = x(t+\theta)\;\; \text{for}\;\; \theta \in[- r_{2 \pi},\ \ 0].$$
In \cite{1}, Arendt gave necessary and sufficient conditions for the existence of periodic solutions of  the following evolution equation.
$$\displaystyle{\frac{d}{dt}x(t)= Ax(t)+f(t)}\;\; \text{for}\;\; t \in \mathbb{R},$$
where $A$ is a closed linear operator on an UMD-space $Y$. \\
In \cite{12}, Hernan et al, studied the existence of periodic solution for the class of linear abstract neutral functional differential equation described in the following form:

\[\frac{d}{dt}[x(t) - Bx(t - r)]= Ax(t)+G(x_{t})+f(t)\;\;\text{ for}\;\; t \in \mathbb{R}\]
where $A: D(A) \rightarrow X$ and $B : D(B) \rightarrow X$ are closed linear operator such that $D(A) \subset D(B)$ and $G \in B(L^{p}([- 2\pi,\ \ 0],\ \ X);\ \ X)$.\\
In {\bf \color{green}{\cite{8}}}, Sylvain Koumla, Khalil Ezzinbi,  Rachid Bahloul established mild solutions for some partial functional integrodifferential equations with finite delay in Fréchet spaces
$$\frac{d}{dt}x(t) = Ax(t) + \int_{0}^{t}B(t - s)x(s)ds +f(t, x_{t}) + h(t, x_{t})$$
This work is organized as follows : After preliminaries in the second section,  we give a main result and the conclusion.

\section{Preliminaries}
Let $X$ be a Banach Space. Firstly, we denote By $\mathbb{T}$ the group defined as the quotient $\mathbb{R}/2 \pi \mathbb{Z}$. There is an identification between functions on $\mathbb{T}$ and $2\pi$-periodic functions on $\mathbb{R}$. We consider the interval $[0, 2\pi$) as a model for $\mathbb{T}$ .\\
Given $1 \leq p < \infty $, we denote by $L^ {p}(\mathbb{T}; X)$ the space of $2\pi$-periodic locally $p$-integrable functions from $\mathbb{R}$ into $X$, with the norm:
\[\left\|f\right\|_{p}: =\left( \int_{0}^{2\pi} \left\|f(t)\right\|^{p}dt \right)^{1/p}\]
For  $f \in L^{p}(\mathbb{T}; X)$, we denote by $\hat{f}(k)$, $k \in \mathbb{Z}$ the $k$-th Fourier coefficient of $f$ that is defined by:

\[\mathcal{F}(f)(k) = \hat{f}(k) = \frac{1}{2\pi}\int_{0}^{2\pi}e^{-ikt}f(t)dt\;\; \text{for}\;\; k \in \mathbb{Z} \ \ \text{and} \  \ t \in \mathbb{R}.\]
For $1 \leq p < \infty$, the periodic vector-valued space is defined by.\\
Let S be the Schwartz space on $\mathbb{R}$ and let S' be the space of all tempered distributions on $\mathbb{R}$. Let
$\phi (\mathbb{R})$ be the set of all systems $\phi = \{\phi_{j} \}_{j \geq 0} \subset S$ satisfying
$$supp(\phi_{0}) \subset [-2, 2]$$
$$supp(\phi_{j}) \subset [-2^{j+1}, -2^{j-1}] \cup [2^{j-1}, 2^{j+1}], \  j \geq 1$$
$$\Sigma_{j \geq 0}\phi_{j}(t) = 1, \  \ t \in \mathbb{R}$$
Let $1 \leq p, q \leq \infty , s \in \mathbb{R}$ and $\phi = (\phi_{j})_{j \geq 0} \in \phi(\mathbb{R})$. We define the X-valued periodic
Besov spaces by
$$B_{p,q}^{s, \phi}((\mathbb{T}; X) = \{f \in D'(\mathbb{T}, X): ||f||_{B_{p,q}^{s, \phi}} = ( \Sigma_{j \geq 0}2^{sjq}  )^{1/q}|| \Sigma_{k \in \mathbb{Z}}e_{k}\phi_{j}(k) \hat{f}(k) ||_{p}^{q}\}$$

\begin{pr}\cite{14}\\
1) $B_{p,q}^{s}((0, 2\pi); X)$ is a Banach space;\\
2) Let $s > 0$. Then $f \in B_{p,q}^{s+1}((0, 2\pi); X)$ in and only if $f$ is differentiale  and $f' \in B_{p,q}^{s}((0, 2\pi); X)$ 
\end{pr}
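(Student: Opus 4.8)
My plan is to prove both parts by reducing periodic Besov spaces to the classical scalar-weighted $\ell^q$-type description given by the dyadic decomposition, and then to transfer known Banach-space completeness arguments and the differentiation characterization. For part (1), I would first observe that the quasi-norm $\|f\|_{B_{p,q}^{s,\phi}}$ is finite precisely when the sequence $\bigl(2^{sj}\|S_j f\|_p\bigr)_{j\ge 0}$ lies in $\ell^q$, where $S_j f := \sum_{k\in\mathbb Z} e_k\,\phi_j(k)\,\hat f(k)$ is the $j$-th Littlewood–Paley block. Each $S_j f$ is a trigonometric polynomial (the support condition on $\phi_j$ makes the sum over $k$ finite), hence lies in $L^p(\mathbb T;X)$, and $f=\sum_j S_j f$ with convergence in $D'(\mathbb T;X)$ by the partition-of-unity property $\sum_j\phi_j\equiv 1$. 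Completeness then follows the standard template: take a Cauchy sequence $(f^{(n)})$ in $B_{p,q}^{s,\phi}$; for each fixed $j$ the blocks $(S_j f^{(n)})_n$ are Cauchy in $L^p(\mathbb T;X)$, which is complete, so they converge to some $g_j\in L^p(\mathbb T;X)$; one checks $g_j$ is supported in the right frequency annulus, sets $f:=\sum_j g_j$ (the series converges in $D'$ because the $\ell^q$-norms are uniformly bounded), and verifies $\|f-f^{(n)}\|_{B_{p,q}^{s,\phi}}\to 0$ by a Fatou-type argument over the index $j$. The only genuinely delicate point is confirming that the limit distribution $f$ has $S_j f = g_j$, i.e.\ that Littlewood–Paley projection commutes with the $D'$-limit; this is routine once one notes $S_j$ is continuous on $D'(\mathbb T;X)$.

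For part (2), the strategy is to exploit the multiplier relation between $\hat f$ and $\widehat{f'}$, namely $\widehat{f'}(k)=ik\,\hat f(k)$, together with the fact that on the support of $\phi_j$ the factor $ik$ has size comparable to $2^{j}$. Concretely, for $j\ge 1$ write $S_j(f')=\sum_k e_k\,\phi_j(k)(ik)\hat f(k)$ and compare it with $2^{j}S_j f$: one introduces the auxiliary symbols $\psi_j(k):=\phi_j(k)\cdot(ik)\cdot 2^{-j}$, which form a bounded family of Marcinkiewicz-type multipliers (smooth, supported in a fixed dilated annulus, with derivative bounds uniform in $j$), so that $\|S_j(f')\|_p \sim 2^{j}\|S_j f\|_p$ up to constants independent of $j$. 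Summing the weighted $\ell^q$-norms then gives $\|f'\|_{B_{p,q}^{s,\phi}} \sim \|f\|_{B_{p,q}^{s+1,\phi}}$, which is exactly the claimed equivalence; the low-frequency block $j=0$ is handled separately and contributes only the mean value of $f$, i.e.\ $\hat f(0)$, which is controlled by $\|f\|_p\le C\|f\|_{B_{p,q}^{s+1,\phi}}$. The converse direction — reconstructing $f$ and showing it is differentiable from the assumption $f'\in B_{p,q}^{s}$ — uses that $f(t)-\hat f(0)=\sum_{k\ne 0}\frac{1}{ik}e_k\widehat{f'}(k)$ with the symbol $\frac{1}{ik}\mathbf 1_{k\ne 0}$ again a uniformly bounded dyadic multiplier, shifting the smoothness index back up by one.

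The main obstacle I anticipate is the uniform-in-$j$ control of the multiplier norms, i.e.\ showing that operators of the form $f\mapsto \sum_k e_k\,m_j(k)\hat f(k)$ with $m_j$ supported in $[2^{j-1},2^{j+1}]$ (and its reflection) are bounded on $L^p(\mathbb T;X)$ with a bound depending only on finitely many derivative seminorms of $m_j$ after rescaling. In the scalar or Hilbert-space setting this is the periodic Marcinkiewicz multiplier theorem; in a general Banach space one must be careful, but since the symbols here are restrictions to $\mathbb Z$ of fixed smooth compactly supported functions dilated by $2^j$, the relevant estimate follows from transference to $\mathbb R$ and a Mikhlin-type bound that holds in any Banach space for such ``nice'' (piecewise-constant-free, finitely-supported-per-scale) symbols — no UMD hypothesis is needed at this level of generality, which is consistent with the paper's stated emphasis that the Besov-space results ``involve only boundedness of the resolvent.'' I would isolate this as a preliminary lemma and then let parts (1) and (2) fall out as described; everything else is bookkeeping with the dyadic partition.
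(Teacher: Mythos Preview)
The paper does not prove this proposition at all: it is quoted from reference~\cite{14} (Keyantuo--Lizama--Poblete) and used as a black box, so there is no ``paper's own proof'' to compare your attempt against.

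Your outline is the standard route and is essentially correct. The completeness argument in part~(1) is routine once one observes that each Littlewood--Paley block $S_j f$ is a trigonometric polynomial, and the differentiation characterization in part~(2) reduces, as you say, to the uniform $L^p(\mathbb{T};X)$-boundedness of the rescaled multipliers $m_j(k)=ik\,2^{-j}\tilde\phi_j(k)$ and their inverses. Your instinct that no UMD hypothesis is needed here is right, but the reason is more elementary than you indicate: each $m_j$ is the restriction to $\mathbb{Z}$ of a fixed Schwartz function dilated by $2^j$, so the associated periodic convolution kernel has $L^1$-norm bounded uniformly in $j$ (via Poisson summation), and convolution against an $L^1$ kernel is bounded on $L^p(\mathbb{T};X)$ for \emph{any} Banach space $X$. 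You should make that mechanism explicit rather than invoking ``a Mikhlin-type bound that holds in any Banach space,'' which as stated is false --- genuine Mikhlin multipliers on $L^p(\mathbb{T};X)$ do require UMD; it is the compact smooth support at each dyadic scale that rescues you here.
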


\begin{defi}\cite{14}\\
For $1 \leq p < \infty$ , a sequence $\left\{M_{k}\right\}_{k \in Z} \subset B(X,Y)$ is a $B_{p,q}^{s}$-multiplier if for each $f \in B_{p,q}^{s}(\mathbb{T}, X),$ there exists  $u \in B_{p,q}^{s}(\mathbb{T},Y)$ such that $\hat{u}(k) = M_{k}\hat{f}(k)$ for all $k \in \mathbb{Z}$.
\end{defi}

\begin{defi}\cite{1}\\
The Banach space X has Fourier type $r \in ]1, 2]$ if there exists $C_{r} > 0$ such that
$$||\mathcal{F}(f)||_{r'} \leq C_{r}||f||_{r}, \  f \in L^{r}(\mathbb{R}, X)$$
where $\frac{1}{r'} + \frac{1}{r} = 1$.
\end{defi}

\begin{defi}\cite{14} \\
Let $\left\{M_{k}\right\}_{k \in Z} \subseteq B(X,Y)$  be a sequence of operators. $\left\{M_{k}\right\}_{k \in Z}$  is M-bounded of order 1( or M-bounded) if 
\begin{equation}\label{M}
\sup_{k}\|M_{k}\| < \infty \  \text{and}  \   \sup_{k}\|k(M_{k+1}-M_{k})\| < \infty
\end{equation}
\end{defi}

\begin{thm}\label{t41} \cite{1}\\
Let X and Y be Banach spaces having Fourier type $r \in ]1, 2]$ and let $\left\{M_{k}\right\}_{k \in Z} \subseteq B(X,Y)$ be a sequence satisfying (\ref{M}).
Then for $1 \leq p,q < \infty, s \in \mathbb{R}, \left\{M_{k}\right\}_{k \in Z}$ is an  $B_{p,q}^{s}$-multiplier.
\end{thm}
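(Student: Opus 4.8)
\medskip
\noindent\emph{Proof strategy.} The plan is to base the argument on the Littlewood--Paley (dyadic) description of the periodic Besov norm, reducing the statement to a single block estimate that is uniform in the dyadic index, and to derive that estimate from the order-$1$ condition \eqref{M} by exploiting the Fourier type of $X$ and $Y$.

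First I would set up the reduction. Fix $\phi=(\phi_j)_{j\ge0}\in\phi(\mathbb{R})$ and, for $f\in D'(\mathbb{T};X)$, write $\Delta_j f:=\sum_{k\in\mathbb{Z}}e_k\,\phi_j(k)\,\hat f(k)$, so that $\|f\|_{B^{s}_{p,q}}=\big(\sum_{j\ge0}2^{sjq}\,\|\Delta_j f\|_p^{q}\big)^{1/q}$. Given $f$ (first a trigonometric polynomial, then a general element by density once the bound below is available), define $u$ by $\hat u(k):=M_k\hat f(k)$. Since $\widehat{\Delta_j u}(k)=\phi_j(k)M_k\hat f(k)=M_k\widehat{\Delta_j f}(k)$, it is enough to produce a constant $C$, independent of $j$ and $f$, such that
\begin{equation}\label{e:block}
\Big\|\sum_{k\in\mathbb{Z}}e_k\,M_k\,\widehat{g}(k)\Big\|_{L^p(\mathbb{T};Y)}\ \le\ C\,\|g\|_{L^p(\mathbb{T};X)}
\end{equation}
whenever the spectrum of $g$ lies in the dyadic annulus $A_j:=\{k:2^{j-1}\le|k|\le 2^{j+1}\}$ (and in $\{|k|\le2\}$ when $j=0$): applying \eqref{e:block} to $g=\Delta_j f$ gives $\|\Delta_j u\|_p\le C\|\Delta_j f\|_p$, and raising this to the power $q$, multiplying by $2^{sjq}$ and summing over $j$ shows $u\in B^{s}_{p,q}(\mathbb{T};Y)$ with $\|u\|_{B^{s}_{p,q}}\le C\|f\|_{B^{s}_{p,q}}$, i.e.\ $\{M_k\}$ is a $B^{s}_{p,q}$-multiplier.

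Next I would prove the uniform block estimate \eqref{e:block}, which is where the hypotheses are used. On $A_j$ the condition \eqref{M} forces $\|M_{k+1}-M_k\|\lesssim 2^{-j}$, so $(M_k)$ varies slowly across $A_j$; an Abel summation around a central index $k_j\sim 2^{j}$ then splits the operator in \eqref{e:block} into a principal term --- the fixed operator $M_{k_j}$ composed with a frequency projection, bounded by $\sup_k\|M_k\|$ times $\|g\|_p$ with no $j$-dependence --- and a remainder built from the increments $M_{k+1}-M_k$ composed with truncated (finite-section) Dirichlet operators on the band $A_j$. Those truncated operators fail to be uniformly bounded on $L^p(\mathbb{T};X)$ for a general Banach space, and this is precisely where Fourier type enters: since $X$ and $Y$ have Fourier type $r$, the Hausdorff--Young inequalities on $\mathbb{T}$ are available for them, and combining these with the Nikolskii (Bernstein) inequalities for functions spectrally supported in $A_j$ lets one bound each truncated operator on the $2^{j}$-dimensional band $A_j$ by a controlled power of $2^{j}$; that power is then absorbed by the $2^{-j}$-decay of the increments coming from \eqref{M}, and summing the $\sim 2^{j}$ pieces of the remainder leaves a bound independent of $j$. (An essentially equivalent route: interpolate $\{M_k\}$ to a function $M\in C^{1}(\mathbb{R};B(X,Y))$ with $\sup_t\|M(t)\|+\sup_t\|tM'(t)\|<\infty$ --- immediate from \eqref{M} --- apply the operator-valued Fourier multiplier theorem on Besov spaces over $\mathbb{R}$, usable here because $X$ and $Y$ have Fourier type $r$, and pull the result back to $\mathbb{T}$ through a de Leeuw-type restriction theorem for periodic Besov spaces.)

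The hard part is the uniform-in-$j$ estimate \eqref{e:block}, and inside it the delicate step is the bookkeeping just described: one must check that the exact power of $2^{j}$ produced by the Hausdorff--Young / Nikolskii scaling on $A_j$ is cancelled by the order-$1$ gain in \eqref{M} (the factor $k$ in $k(M_{k+1}-M_k)$), with nothing left over --- a balance that breaks down for a general Banach space and is restored by the Fourier type hypothesis. The remaining ingredients --- the low-frequency block $j=0$, the finite overlap of the supports of the $\phi_j$ (which legitimises the reduction in the first step, via $\phi_j=\phi_j(\phi_{j-1}+\phi_j+\phi_{j+1})$ on $\mathbb{Z}$), and the density of trigonometric polynomials in $B^{s}_{p,q}(\mathbb{T};X)$ for $p,q<\infty$ --- are routine.
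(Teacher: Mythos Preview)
The paper does not contain a proof of this theorem: it is quoted verbatim from Arendt--Bu \cite{1} (note the citation attached to the theorem header) and is used as a black box in the proof of the main existence result. There is therefore no in-paper argument to compare your proposal against.

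As for the proposal itself, your strategy --- reduce to a uniform dyadic block estimate via the Littlewood--Paley description of the Besov norm, then exploit the Fourier type of $X$ and $Y$ together with the order-$1$ variation control \eqref{M} on each band --- is the right shape, and is in spirit the approach of \cite{1}. Two caveats. First, the ``bookkeeping'' you flag as delicate is indeed the whole proof: you have not actually shown that the power of $2^{j}$ coming from Hausdorff--Young/Nikolskii on a band of cardinality $\sim 2^{j}$ is exactly cancelled by the $2^{-j}$ increment bound, and in the Abel-summation route the number of remainder pieces is itself $\sim 2^{j}$, so a naive count gives a surplus factor that must be removed by a sharper organisation (in \cite{1} this is done by writing the block operator as convolution with an explicit $Y$-valued kernel and estimating that kernel in $L^{1}$ via Fourier type, rather than by summing truncated Dirichlet operators). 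Second, your alternative route through a de Leeuw restriction for operator-valued periodic Besov multipliers presupposes a transference result that is not lighter than the theorem you are proving. If you want a self-contained argument, follow the kernel estimate in \cite{1} rather than the Abel-summation sketch.
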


\begin{lem} \cite{7} \\ 
Let $L: L^{p}(\mathbb{T}, X) \rightarrow  X$  be a bounded linear operateur. Then
$$\widehat{L(u_{\textbf{.}})}(k) = L(e_{k}\hat{u}(k)):=L_{k}\hat{u}(k)\;\;  \text{for all}\;\; k \in \mathbb{Z}$$
and $\{ L_{k} \}_{k \in \mathbb{Z}}$ is r-bounded such that 
\begin{center}
$R_{p}((L_{k})_{k \in \mathbb{Z}})\leq (2r_{2\pi})^{1/p} \left\|L\right\|$.
\end{center}
\end{lem}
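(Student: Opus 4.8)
The plan is to compute the Fourier coefficients of $L(u_\cdot)$ directly from the definition and then verify the $r$-boundedness estimate by hand. First I would fix $k \in \mathbb{Z}$ and $u \in L^p(\mathbb{T}, X)$ and compute
\[
\widehat{L(u_\cdot)}(k) = \frac{1}{2\pi}\int_0^{2\pi} e^{-ikt} L(u_t)\, dt .
\]
Since $L$ is a bounded linear operator on $L^p([-r_{2\pi},0], X)$, it commutes with the $X$-valued Bochner integral, so I would like to write this as $L$ applied to the function $\theta \mapsto \frac{1}{2\pi}\int_0^{2\pi} e^{-ikt} u_t(\theta)\, dt = \frac{1}{2\pi}\int_0^{2\pi} e^{-ikt} u(t+\theta)\, dt$. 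A change of variable $s = t+\theta$ together with the $2\pi$-periodicity of $u$ turns this into $e^{ik\theta}\,\hat{u}(k)$, i.e.\ the function $e_k \hat u(k) \in L^p([-r_{2\pi},0],X)$ (using $r_{2\pi} = 2\pi N$ so that the interval is a union of full periods). Hence $\widehat{L(u_\cdot)}(k) = L(e_k \hat u(k)) =: L_k \hat u(k)$, which is the first claim.

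For the $r$-boundedness of $\{L_k\}_{k\in\mathbb{Z}}$, I would start from the definition: given $k_1,\dots,k_n \in \mathbb{Z}$, vectors $x_1,\dots,x_n \in X$, and Rademacher functions $r_1,\dots,r_n$, I must estimate $\big\| \sum_j r_j(\cdot) L_{k_j} x_j \big\|_{L^p(0,1;X)}$. Writing $L_{k_j} x_j = L(e_{k_j} x_j)$ and using linearity and boundedness of $L$, one gets
\[
\Big\| \sum_j r_j(s) L_{k_j} x_j \Big\| \le \|L\| \, \Big\| \sum_j r_j(s) e_{k_j} x_j \Big\|_{L^p([-r_{2\pi},0],X)}
\]
for each fixed $s$. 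Then I would take $L^p$-norms in $s$ over $(0,1)$, use Fubini to interchange the order of integration, and bound the resulting $L^p([-r_{2\pi},0] \times (0,1), X)$-norm. The point is that $\|e_{k_j}(\theta)\| = 1$ pointwise, so after integrating over $\theta \in [-r_{2\pi},0]$ (an interval of length $r_{2\pi} = 2\pi N$, but the bound should come out as $(2r_{2\pi})^{1/p}$ — I would track the constant carefully, as it likely arises from estimating $\int_{-r_{2\pi}}^0 \|\sum_j r_j(s) x_j\|^p \, d\theta$ against a comparison with the Rademacher average, possibly via a Kahane-type or simply a direct $L^p$ comparison giving the factor $2$).

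The main obstacle I anticipate is pinning down the precise constant $(2r_{2\pi})^{1/p}$ in the $R_p$-bound. The computation of $\widehat{L(u_\cdot)}(k) = L_k \hat u(k)$ is essentially bookkeeping with Bochner integrals and periodicity, and the qualitative $r$-boundedness (indeed uniform boundedness, then $R$-boundedness via the bounded-operator trick) follows readily from $\sup_k \|L_k\| \le \|L\|$. But extracting exactly the factor $(2r_{2\pi})^{1/p}$ requires carefully relating the $L^p$-norm over the delay interval of a Rademacher sum $\sum_j r_j(s) e_{k_j}(\theta) x_j$ to the $L^p$-norm of $\sum_j r_j(s) x_j$ over $(0,1)$; this is where one must exploit that on each period the functions $e_{k_j}$ have modulus one and use a change of variables / periodicity argument to replace the $\theta$-average by a factor, absorbing the length $r_{2\pi}$ and an extra factor of $2$ coming from the standard estimate on Rademacher sums. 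I would handle this by reducing to the scalar inequality $\int_{-r_{2\pi}}^{0} \| \sum_j c_j(\theta) x_j \|^p\, d\theta \le 2 r_{2\pi} \int_0^1 \| \sum_j r_j(s) x_j \|^p\, ds$ for unimodular $c_j$, citing or reproving the relevant comparison, and then take $p$-th roots.
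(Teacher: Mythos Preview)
The paper does not prove this lemma at all: it is quoted verbatim from reference~\cite{7} and no argument is given. So there is nothing in the paper to compare your proposal against.

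That said, your plan is essentially the standard one and is correct in outline. The computation of $\widehat{L(u_{\cdot})}(k)=L(e_{k}\hat u(k))$ is exactly the bookkeeping you describe: pull $L$ outside the Bochner integral in $t$, then change variables $s=t+\theta$ and use $2\pi$-periodicity (this is where $r_{2\pi}=2\pi N$ matters). For the $R$-boundedness, your chain
\[
\Big\|\sum_j r_j(s)L_{k_j}x_j\Big\|_X
\le \|L\|\,\Big\|\sum_j r_j(s)e_{k_j}x_j\Big\|_{L^p([-r_{2\pi},0],X)},
\]
followed by Fubini in $(s,\theta)$ and Kahane's contraction principle to remove the unimodular factors $e^{ik_j\theta}$, is the right route. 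Your own hesitation about the constant is justified: the complex contraction principle costs a factor $2$ \emph{outside} the $p$-th root, so the argument you sketch actually yields $R_p\le 2\,r_{2\pi}^{1/p}\|L\|$, not $(2r_{2\pi})^{1/p}\|L\|$. The inequality you write at the end, $\int_{-r_{2\pi}}^{0}\|\sum_j c_j(\theta)x_j\|^p\,d\theta \le 2r_{2\pi}\int_0^1\|\sum_j r_j(s)x_j\|^p\,ds$, is not what the contraction principle gives (it mixes a deterministic $\theta$-integral on the left with a Rademacher average on the right and hides the factor $2^p$ versus $2$). This is a constant-tracking issue, not a structural gap; the qualitative $R$-boundedness is fine, and for the use made of the lemma in the paper the exact constant is irrelevant.
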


Next we give some preliminaries. Given $a \in L^{1}(\mathbb{R}^{+})$ and $u : [0, 2\pi] \rightarrow X$ (extended by periodicity to $\mathbb{R}$), we define
\begin{center}
$F(t) = \int_{- \infty}^{t}a(t - s)u(s)ds.$
\end{center}
Let $\tilde{a}(\lambda) =\int_{0}^{\infty} e^{- \lambda t}a(t)dt$ be the Laplace transform of $a$. An easy computation
shows that:

\begin{equation}\label{e3}
\hat{F}(k) = \tilde{a}(ik) \hat{u}(k), \text{for all} k \in \mathbb{Z}
\end{equation}

\subsection{Main result}
For convenience, we introduce the following notations: \\
$B_{k} = k A(L_{k+1} - L_{k}), P_{k}  = k(\tilde{a}(i(k+1)) - \tilde{a}(ik)), Q_{k} = k(L_{k+1} - L_{k}), R_{k} = k(G_{k+1} - G_{k}).$ 
In order to give our result, the following hypotheses are fundamental. 

\begin{enumerate}
\item[$H_{1}$]:$\{ L_{k}\}_{k \in Z}, \{ G_{k}\}_{k \in Z}$ and $\{ \tilde{a}(ik)\}_{k \in Z}$ are M-bounded (i.e $\{ P_{k}\}_{k \in Z}, \{ Q_{k}\}_{k \in Z}$ and $\{ R_{k}\}_{k \in Z}$ are bounded)
\item[$H_{2}$]: $\sup_{k \in \mathbb{Z}}\| B_{k}\|  < \infty$
\end{enumerate}

\begin{defi} :
Let $1 \leq p,q < \infty$ and $s > 0$. We say that a function $x \in  B_{p,q}^{s}(\mathbb{T}; X)$ is a strong  $B_{p,q}^{s}$-solution of (\ref{e1}) if $(x(t) - L(x_{t})) \in D(A), Dx_{t} \in B_{p,q}^{s+1}(\mathbb{T}; X)$ and equation (\ref{e1}) holds for a.e $t \in \mathbb{T}$.
\end{defi}
We prove the following result.

\begin{lem}\label{l1}:
Let X be a Banach space and A be a  linear closed operator. Suppose that $(ikD_{k} - AD_{k}-G_{k}-\tilde{a}(ik))$ is bounded invertible and $ik(ikD_{k} - AD_{k}-G_{k}-\tilde{a}(ik))^{-1}$ is bounded. 
Then  \\ $\left\{(ikD_{k} - AD_{k}-G_{k}-\tilde{a}(ik))^{-1}\right\}_{k \in Z}$ and $\left\{ik(ikD_{k} - AD_{k}-G_{k}-\tilde{a}(ik))^{-1}\right\}_{k \in Z}$ are M-bounded.
\end{lem}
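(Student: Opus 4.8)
My plan is to abbreviate $S_k := ikD_k - AD_k - G_k - \tilde a(ik)$ (recall $D_k = I - L_k$, so that $k(D_k - D_{k+1}) = Q_k$ and $kA(D_k - D_{k+1}) = B_k$), and to reduce both assertions to the resolvent-type identity
$$S_{k+1}^{-1} - S_k^{-1} = S_{k+1}^{-1}(S_k - S_{k+1})S_k^{-1}.$$
First I would extract from the hypotheses the ``size'' bounds they already contain: $\{ik\,S_k^{-1}\}$ is uniformly bounded by assumption, say $\sup_k\|ik\,S_k^{-1}\| = C_1$, whence $\|S_k^{-1}\| = |k|^{-1}\|ik\,S_k^{-1}\| \le C_1$ for $k\neq 0$ while $S_0^{-1}$ is one bounded operator, so $\sup_k\|S_k^{-1}\| < \infty$; and then $ik\,S_{k+1}^{-1} = i(k+1)S_{k+1}^{-1} - iS_{k+1}^{-1}$ shows that $\sup_k\|ik\,S_{k+1}^{-1}\| < \infty$ as well. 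I will also use $\sup_k\|D_k\| \le 1 + \sup_k\|L_k\| < \infty$, the last bound being part of hypothesis $H_1$.

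The key computation is the expansion of $k(S_k - S_{k+1})$: using $ikD_k - i(k+1)D_{k+1} = ik(D_k - D_{k+1}) - iD_{k+1}$ together with $D_k - D_{k+1} = L_{k+1} - L_k$, this simplifies, after multiplication by $k$, to $k(S_k - S_{k+1}) = ik\,Q_k - ik\,D_{k+1} - B_k + R_k + P_k$. Inserting this into the resolvent identity and absorbing the scalar $ik$ into the left-hand resolvent factor gives
$$k(S_{k+1}^{-1}-S_k^{-1}) = (ik\,S_{k+1}^{-1})Q_kS_k^{-1} - (ik\,S_{k+1}^{-1})D_{k+1}S_k^{-1} - S_{k+1}^{-1}B_kS_k^{-1} + S_{k+1}^{-1}R_kS_k^{-1} + S_{k+1}^{-1}P_kS_k^{-1},$$
a finite sum of products of uniformly bounded sequences: $\{Q_k\},\{R_k\},\{P_k\}$ are bounded by $H_1$, $\{B_k\}$ by $H_2$, and $\{ik\,S_{k+1}^{-1}\},\{D_{k+1}\},\{S_k^{-1}\}$ by the previous paragraph. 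Hence $\sup_k\|k(S_{k+1}^{-1}-S_k^{-1})\| < \infty$, and together with $\sup_k\|S_k^{-1}\| < \infty$ this is exactly (\ref{M}) for $\{S_k^{-1}\}$.

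For the second family the plan is to write $i(k+1)S_{k+1}^{-1} - ik\,S_k^{-1} = i(k+1)(S_{k+1}^{-1}-S_k^{-1}) + iS_k^{-1}$, so that $k\big(i(k+1)S_{k+1}^{-1} - ik\,S_k^{-1}\big) = i(k+1)\cdot k(S_{k+1}^{-1}-S_k^{-1}) + ik\,S_k^{-1}$; the second term is bounded by hypothesis, and in the first I substitute the displayed formula for $k(S_{k+1}^{-1}-S_k^{-1})$ and this time move the leftover scalar $ik$ in the middle block onto the right-hand resolvent factor, turning $ik\,Q_kS_k^{-1}$ into $Q_k(ik\,S_k^{-1})$ and $ik\,D_{k+1}S_k^{-1}$ into $D_{k+1}(ik\,S_k^{-1})$, so that once more everything is a product of uniformly bounded sequences; this yields $\sup_k\|k(i(k+1)S_{k+1}^{-1} - ik\,S_k^{-1})\| < \infty$ and hence (\ref{M}) for $\{ik\,S_k^{-1}\}$. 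The only delicate point in the whole argument is the term $ik\,Q_k$ (equivalently $ik(D_k - D_{k+1})$) in $k(S_k - S_{k+1})$, which is not uniformly bounded on its own, so the proof rests on always keeping this scalar $ik$ glued to one of the resolvents $S_k^{-1}$ or $S_{k+1}^{-1}$, for which ``$ik$ times the resolvent'' is precisely the quantity the hypothesis controls; the unboundedness of $A$ causes no trouble, since after differencing $A$ survives only inside $B_k = kA(L_{k+1}-L_k)$, which $H_2$ declares bounded.
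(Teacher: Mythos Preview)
Your proof is correct and follows essentially the same route as the paper's: both arguments hinge on the resolvent identity $N_{k+1}-N_k = N_{k+1}(N_k^{-1}-N_{k+1}^{-1})N_k$ (with $N_k$ the inverse you call $S_k^{-1}$), the explicit computation of $k(N_k^{-1}-N_{k+1}^{-1})$ in terms of $Q_k,R_k,P_k,B_k$ and the extra piece $-ikD_{k+1}$, and the trick of absorbing the dangerous scalar $ik$ into one of the resolvent factors so that the hypothesis $\sup_k\|ikN_k\|<\infty$ controls it. Your write-up is in fact a bit tidier than the paper's --- you derive $\sup_k\|N_k\|<\infty$ from the stated hypothesis rather than asserting it, and for the second family you use the clean telescoping $i(k+1)N_{k+1}-ikN_k = i(k+1)(N_{k+1}-N_k)+iN_k$ before substituting, whereas the paper redoes the resolvent computation from scratch --- but the substance is identical.
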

\begin{proof} Let 
$S_{k} = ikN_{k}, N_{k} = (C_{k} - AD_{k} )^{-1}$ and  $C_{k} = ikD_{k} - G_{k} - \tilde{a}(ik)$.\\
For convenience, we introduce the following result
\begin{align*}
C_{k} - C_{k+1}&= [ikD_{k} - G_{k} - \tilde{a}(ik)] - [ i(k+1)D_{k+1} - G_{k+1} - \tilde{a}(i(k+1))]\\
&=ikI-ikL_{k} - G_{k} - \tilde{a}(ik)- (ik+i)(I-L_{k+1})+ G_{k+1} + \tilde{a}(i(k+1)\\
&=-iI+iL_{k+1}+ik(L_{k+1}- L_{k})+( G_{k+1}- G_{k}) + (\tilde{a}(i(k+1)-\tilde{a}(ik))
\end{align*}
Then we have
\begin{align*}
k(C_{k} - C_{k+1})&=-ikI+ikL_{k+1}+ik(k(L_{k+1}- L_{k}))+ k(G_{k+1}- G_{k}) + k(\tilde{a}(i(k+1)-\tilde{a}(ik))\\
&=-ikI + ikL_{k+1} + ikQ_{k} + R_{k} + P_{k}
\end{align*}
Now, we are going to show that

\begin{eqnarray}\label{b1}
\left\{
\begin{array}{ccccc}
\sup_{k}\|k(N_{k+1} - N_{k})\| < \infty,\\\\
\sup_{k}\|k(S_{k+1} - S_{k})\| < \infty
\end{array}
\right.
\end{eqnarray}
By hypothesis we have, $\{ N_{k}\}_{k \in \mathbb{Z}}, \{ S_{k}\}_{k \in \mathbb{Z}}, \{ T_{k} \}_{k \in \mathbb{Z}}$ and $\{ F_{k}\}_{k \in \mathbb{Z}}$ are bounded. Then We have
\begin{align*}
\sup_{k \in \mathbb{Z}}\| k (N_{k+1} - N_{k})\|&=\sup_{k \in \mathbb{Z}}\left\|(k[(C_{k+1} - AD{k+1})^{-1} - (C_{k}-  AD_{k})^{-1}]) \right\| \\
&=\sup_{k \in \mathbb{Z}}\left\|k N_{k+1}[C_{k}- AD_{k}-C_{k+1}+ AD_{k+1}]N_{k}\right\|\\
&=\sup_{k \in \mathbb{Z}}\left\| k N_{k+1}[(C_{k}-C_{k+1})- A(L_{k+1}-L_{k})]N_{k}\right\|\\ 
&=\sup_{k \in \mathbb{Z}}\left\| k N_{k+1}[k(C_{k}-C_{k+1})- kA(L_{k+1}-L_{k})]N_{k}\right\|\\ 
&=\sup_{k \in \mathbb{Z}}\left\|  N_{k+1}[-ikI + ikL_{k+1} + ikQ_{k} + R_{k} + P_{k} -  B_{k}]N_{k}\right\|\\
&=\sup_{k \in \mathbb{Z}}\left\| [-N_{k+1} + N_{k+1}L_{k+1}+ N_{k+1}Q_{k}]S_{k} + N_{k+1}R_{k}N_{k} + N_{k+1}P_{k}N_{k}-  N_{k+1} B_{k}N_{k} \right\|.
\end{align*}
We obtain: 
\begin{equation}\label{b3}
 \sup_{k \in \mathbb{Z}}\| k (N_{k+1} - N_{k})\| < \infty 
\end{equation}
On the other hand,  we have 
\begin{align*}
\sup_{k \in \mathbb{Z}}\| k (S_{k+1} - S_{k})\|&=\sup_{k \in \mathbb{Z}}\left\|k[i(k+1)(C_{k+1}-  AD_{k+1})^{-1} - ik(C_{k}- AD_{k})^{1}]\right\|\\
&=\sup_{k \in \mathbb{Z}}\left\|kN_{k+1}[i(k+1)(C_{k} - A(I-L_{k})) - ik(C_{k+1} - A(I-L_{k+1}))]N_{k}\right\| \\
&=\sup_{k \in \mathbb{Z}}\left\|kN_{k+1}[ik(C_{k}-C_{k+1})+i(C_{k} - A(I-L_{k})) - ikA(L_{k+1}-L_{k}))]N_{k}\right\| \\
&=\sup_{k \in \mathbb{Z}}\left\|N_{k+1}[k(C_{k}-C_{k+1})]ikN_{k}+ikN_{k+1} - N_{k+1} kA(L_{k+1}-L_{k}))ikN_{k}\right\| \\
&=\sup_{k \in \mathbb{Z}}\left\|N_{k+1}[k(C_{k}-C_{k+1})]ikN_{k}+ikN_{k+1} - N_{k+1} B_{k}ikN_{k}\right\| \\
&=\sup_{k \in \mathbb{Z}}\left\|N_{k+1}[-ikI + ikL_{k+1} + ikQ_{k} + R_{k} + P_{k} ]S_{k}+\frac{k}{k+1}S_{k+1} - N_{k+1} B_{k}S_{k}\right\| \\
&=\sup_{k \in \mathbb{Z}}\left\|\frac{k}{k+1}S_{k+1}[-I + L_{k+1} + Q_{k}]S_{k}+N_{k+1}[ R_{k} + P_{k} ]S_{k}+\frac{k}{k+1}S_{k+1} - N_{k+1} B_{k}S_{k}\right\| 
\end{align*}
Then  
\begin{center}$\sup_{k \in \mathbb{Z}}\| k (S_{k+1} - S_{k})\| < \infty$ \end{center}
so, $(N_{k})_{k \in \mathbb{Z}}$ and  $(S_{k})_{k \in \mathbb{Z}}$  are M-bounded. 
\end{proof}

\begin{lem}\label{l2}:
Let X be a Banach space and A be a  linear closed operator. Suppose that $(ikD_{k} - AD_{k}-G_{k}-\tilde{a}(ik))$ is bounded invertible and $ik(ikD_{k} - AD_{k}-G_{k}-\tilde{a}(ik))^{-1}$ is bounded. 
Then  $\left\{ G_{k}N_{k} \right\}_{k \in Z}$ and $\left\{ \tilde{a}(ik)N_{k} \right\}_{k \in Z}$ are M-bounded.
\end{lem}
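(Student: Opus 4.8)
The plan is to deduce this from Lemma \ref{l1} together with hypothesis $H_1$, using only the observation that the class of M-bounded sequences of operators is stable under (operator) products. Concretely, suppose $\{A_{k}\}_{k\in\mathbb Z}$ and $\{B_{k}\}_{k\in\mathbb Z}$ are M-bounded sequences with $A_{k}B_{k}$ well defined. Then $\sup_{k}\|A_{k}B_{k}\|\le(\sup_{k}\|A_{k}\|)(\sup_{k}\|B_{k}\|)<\infty$, and the discrete Leibniz identity
$$k(A_{k+1}B_{k+1}-A_{k}B_{k})=\bigl(k(A_{k+1}-A_{k})\bigr)B_{k+1}+A_{k}\bigl(k(B_{k+1}-B_{k})\bigr)$$
exhibits $k(A_{k+1}B_{k+1}-A_{k}B_{k})$ as a sum of products of bounded sequences, whence $\sup_{k}\|k(A_{k+1}B_{k+1}-A_{k}B_{k})\|<\infty$ and $\{A_{k}B_{k}\}_{k\in\mathbb Z}$ is M-bounded.

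First I would apply this with $A_{k}=G_{k}$ and $B_{k}=N_{k}$. By $H_{1}$ the sequence $\{G_{k}\}_{k\in\mathbb Z}$ is M-bounded (this is precisely $\sup_{k}\|G_{k}\|<\infty$ together with boundedness of $\{R_{k}\}=\{k(G_{k+1}-G_{k})\}$), and by Lemma \ref{l1} the sequence $\{N_{k}\}_{k\in\mathbb Z}=\{(ikD_{k}-AD_{k}-G_{k}-\tilde a(ik))^{-1}\}_{k\in\mathbb Z}$ is M-bounded; hence $\{G_{k}N_{k}\}_{k\in\mathbb Z}$ is M-bounded. Next, the same argument with $A_{k}=\tilde a(ik)$ and $B_{k}=N_{k}$: by $H_{1}$ the scalar sequence $\{\tilde a(ik)\}_{k\in\mathbb Z}$ is M-bounded (boundedness of $\sup_{k}|\tilde a(ik)|$ and of $\{P_{k}\}$), and $\{N_{k}\}$ is M-bounded by Lemma \ref{l1}, so $\{\tilde a(ik)N_{k}\}_{k\in\mathbb Z}$ is M-bounded.

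If one prefers to avoid the preliminary observation, one can argue directly by writing $k(G_{k+1}N_{k+1}-G_{k}N_{k})=R_{k}N_{k+1}+G_{k}\,k(N_{k+1}-N_{k})$ and estimating the two summands by $\|R_{k}\|\,\|N_{k+1}\|$ and $\|G_{k}\|\,\|k(N_{k+1}-N_{k})\|$, all of which are uniformly bounded in $k$ by $H_{1}$ and Lemma \ref{l1}; the computation for $\{\tilde a(ik)N_{k}\}$ is identical with $P_{k}$ in place of $R_{k}$ and $\tilde a(ik)$ in place of $G_{k}$. I do not expect a genuine obstacle here: all the heavy lifting, namely the M-boundedness of the resolvent family $\{N_{k}\}$ and of $\{S_{k}\}$, was already carried out in Lemma \ref{l1}, and Lemma \ref{l2} is a short bookkeeping corollary. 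The only point needing attention is to make sure that the uniform bounds on $\{G_{k}\}$, $\{\tilde a(ik)\}$, $\{R_{k}\}$ and $\{P_{k}\}$ are all genuinely available from $H_{1}$, and the uniform bounds on $\{N_{k}\}$ and $\{k(N_{k+1}-N_{k})\}$ from Lemma \ref{l1}.
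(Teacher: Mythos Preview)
Your proposal is correct and takes essentially the same approach as the paper: both use the discrete Leibniz decomposition $k(G_{k+1}N_{k+1}-G_{k}N_{k})=R_{k}N_{k+1}+G_{k}\,k(N_{k+1}-N_{k})$ (and the analogous identity with $\tilde a(ik)$ and $P_{k}$), then invoke $H_{1}$ and the bound on $\{k(N_{k+1}-N_{k})\}$ from Lemma~\ref{l1}. Your preliminary abstraction of the product rule for M-bounded sequences is a tidier presentation of exactly the same computation the paper carries out by hand.
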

\begin{proof} Let $T_{k} = G_{k}N_{k}$ and $F_{k} = \tilde{a}(ik)N_{k}$.\\
we have 
\begin{align*}
\sup_{k \in \mathbb{Z}}\left\|k (T_{k+1} - T_{k})\right\|&=\sup_{k \in \mathbb{Z}}\left\|(k[ G_{k+1}N_{k+1} - G_{k}N_{k}]\right\| \\
&=\sup_{k \in \mathbb{Z}}\left\|k( G_{k+1}- G_{k})N_{k+1} +  G_{k}k(N_{k+1}-N_{k}) \right\|\\
&\leq \sup_{k \in \mathbb{Z}}\left\|k( G_{k+1}- G_{k})N_{k+1}\right\| + \sup_{k \in \mathbb{Z}}\left\|G_{k}k(N_{k+1}-N_{k})\right\|\\
&\leq \sup_{k \in \mathbb{Z}}\left\|R_{k}N_{k+1}\right\| +  \sup_{k \in \mathbb{Z}}\left\|G_{k}\right\|\sup_{k \in \mathbb{Z}}\left\|k(N_{k+1}-N_{k})\right\|
\end{align*}
Then by (\ref{b3}) we have
\begin{center}$\sup_{k \in \mathbb{Z}}\| k (T_{k+1} - T_{k})\| < \infty$ \end{center}
and
\begin{align*}
\sup_{k \in \mathbb{Z}}\left\|k (F_{k+1} - F_{k})\right\|&=\sup_{k \in \mathbb{Z}}\left\|(k[ \tilde{a}(i(k+1))N_{k+1} - \tilde{a}(ik)N_{k}]\right\| \\
&=\sup_{k \in \mathbb{Z}}\left\|k( N_{k+1}- N_{k})\tilde{a}(i(k+1)) + k(\tilde{a}(i(k+1)) - \tilde{a}(ik) )N_{k}) \right\|\\
&=\sup_{k \in \mathbb{Z}}\left\|k( N_{k+1}- N_{k})\tilde{a}(i(k+1)) + F_{k}N_{k}) \right\|
\end{align*}
Then by (\ref{b3}) we have
\begin{center}$\sup_{k \in \mathbb{Z}}\| k (F_{k+1} - F_{k})\| < \infty$ \end{center}
So, $(T_{k})_{k \in \mathbb{Z}}$ and  $(F_{k})_{k \in \mathbb{Z}}$  are M-bounded.
\end{proof}

\begin{thm}
Let $1 \leq p,q < \infty$ and $s > 0$. Let X be a Banach space having Fourier type $r \in ]1, 2]$ and A be a linear operator. If  $(ikD_{k} - AD_{k}-G_{k}-\tilde{a}(ik))$ is bounded invertible and $ik(ikD_{k} - AD_{k}-G_{k}-\tilde{a}(ik))^{-1}$ is bounded. Then for every $f \in B^{s}_{p,q}(\mathbb{T}, X)$ there exist a unique strong $B^{s}_{p,q}$-solution of ({\bf \color{red}{\ref{e1}}}).
\end{thm}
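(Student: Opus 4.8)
The plan is to characterize a strong $B^{s}_{p,q}$-solution through its Fourier coefficients and then to construct it by means of the operator-valued multiplier theorem (Theorem \ref{t41}), with the required M-boundedness supplied by Lemmas \ref{l1} and \ref{l2}. Throughout I would write $u := x - L(x_{\cdot})$, $D_k := I - L_k$, $C_k := ikD_k - G_k - \tilde{a}(ik)$, $N_k := (C_k - AD_k)^{-1}$ and $S_k := ikN_k$.

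First I would prove the \emph{necessity} part, which immediately yields uniqueness. If $x \in B^{s}_{p,q}(\mathbb{T}, X)$ is a strong $B^{s}_{p,q}$-solution of (\ref{e1}), then taking $k$-th Fourier coefficients and using the lemma from \cite{7} (so $\widehat{L(x_{\cdot})}(k) = L_k\hat{x}(k)$ and $\widehat{G(x_{\cdot})}(k) = G_k\hat{x}(k)$), the identity (\ref{e3}) for the convolution term $F(t) = \int_{-\infty}^{t} a(t-s)x(s)\,ds$, the relation $\hat{u}(k) = D_k\hat{x}(k)$, and the closedness of $A$ (so that $\widehat{Au}(k) = A\hat{u}(k)$), one obtains $(ikD_k - AD_k - G_k - \tilde{a}(ik))\hat{x}(k) = \hat{f}(k)$ for all $k \in \mathbb{Z}$. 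By the invertibility hypothesis this forces $\hat{x}(k) = N_k\hat{f}(k)$, and uniqueness of Fourier coefficients gives uniqueness of the solution.

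For \emph{existence} I would run the argument in reverse. By Lemma \ref{l1} the sequence $\{N_k\}_{k \in \mathbb{Z}}$ is M-bounded, so since $X$ has Fourier type $r$, Theorem \ref{t41} shows that $\{N_k\}_{k \in \mathbb{Z}}$ is a $B^{s}_{p,q}$-multiplier; hence there is $x \in B^{s}_{p,q}(\mathbb{T}, X)$ with $\hat{x}(k) = N_k\hat{f}(k)$, and I set $u := x - L(x_{\cdot})$, so $\hat{u}(k) = D_kN_k\hat{f}(k)$. The next step is to verify that $u$ has the regularity demanded by the definition of a strong solution. From $(C_k - AD_k)N_k = I$ one gets $AD_kN_k = C_kN_k - I$, and from $ikD_kN_k = (I - L_k)S_k = S_k - L_kS_k$ together with Lemma \ref{l1}, Lemma \ref{l2}, hypothesis $H_1$ and the stability of M-boundedness under products (since $k(M_{k+1}L_{k+1} - M_kL_k) = k(M_{k+1} - M_k)L_{k+1} + M_k\,k(L_{k+1} - L_k)$), one sees that both $\{ikD_kN_k\}_{k \in \mathbb{Z}}$ and $\{AD_kN_k\}_{k \in \mathbb{Z}}$ are M-bounded, hence $B^{s}_{p,q}$-multipliers by Theorem \ref{t41}. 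From the first, Theorem \ref{t41} and part 2 of the Proposition give $u \in B^{s+1}_{p,q}(\mathbb{T}, X)$ with $\widehat{u'}(k) = ik\hat{u}(k)$; from the second they give $w \in B^{s}_{p,q}(\mathbb{T}, X)$ with $\hat{w}(k) = AD_kN_k\hat{f}(k) = A\hat{u}(k)$, and applying the closed operator $A$ to the Fej\'er means of $u$ yields $u(t) \in D(A)$ for a.e.\ $t$ and $Au = w$. It then remains to note that for every $k \in \mathbb{Z}$ one has $\widehat{u'}(k) - \widehat{Au}(k) - \widehat{G(x_{\cdot})}(k) - \hat{F}(k) - \hat{f}(k) = (ikD_k - AD_k - G_k - \tilde{a}(ik))N_k\hat{f}(k) - \hat{f}(k) = 0$, so by uniqueness of Fourier coefficients (\ref{e1}) holds a.e.\ on $\mathbb{T}$, and $x$ is the required strong $B^{s}_{p,q}$-solution.

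I expect the main difficulty to lie in two places. The first is the bookkeeping needed to confirm that all the composed sequences that appear — $L_kS_k$, $G_kN_k$, $\tilde{a}(ik)N_k$, $AD_kN_k$ and $ikD_kN_k$ — are genuinely M-bounded, so that Theorem \ref{t41} may be applied; this is where Lemmas \ref{l1} and \ref{l2} and the product rule for M-bounded sequences do the work. The second is the transition from the coefficient identity $\hat{w}(k) = A\hat{u}(k)$ to the pointwise statements $u(t) \in D(A)$ and $Au(t) = w(t)$ for a.e.\ $t$: this needs the standard approximation argument with trigonometric polynomials (Fej\'er means) together with the closedness of $A$, and it is precisely the membership of $u$ and of $w$ in the appropriate Besov spaces, established just above, that makes this legitimate.
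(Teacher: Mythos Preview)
Your proposal is correct and follows essentially the same route as the paper: use Lemmas~\ref{l1} and~\ref{l2} to get M-boundedness of $N_k$, $S_k$, $G_kN_k$, $\tilde a(ik)N_k$, invoke Theorem~\ref{t41} to turn these into $B^{s}_{p,q}$-multipliers, recover $AD_kN_k$ and $ikD_kN_k$ as multipliers from the algebraic identity $(ikD_k-AD_k-G_k-\tilde a(ik))N_k=I$, and then read off existence and uniqueness at the level of Fourier coefficients. Your treatment is in fact a bit more explicit than the paper's in two places --- the product-rule justification that $\{L_kS_k\}$ (hence $\{ikD_kN_k\}$) is M-bounded, and the Fej\'er-means/closed-graph passage from $\hat w(k)=A\hat u(k)$ to $u(t)\in D(A)$ a.e.\ --- but these are refinements of the same argument, not a different one.
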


\begin{proof} Define $S_{k} = ikN_{k}, N_{k} = (ikD_{k}- AD_{k}-G_{k}-\tilde{a}(ik))^{-1}$, $F_{k} = \tilde{a}(ik)N_{k}$ and $T_{k} =G_{k}N_{k}$ for $k \in \mathbb{Z}$. Since by  Lemma(\ref{l1}) and Lemma(\ref{l2}), $(S_{k})_{k \in \mathbb{Z}}, (N_{k})_{k \in \mathbb{Z}}, (F_{k})_{k \in \mathbb{Z}}$ and $(T_{k})_{k \in \mathbb{Z}}$ are M-bounded, we have  by Theorem \ref{t41} that  $(S_{k})_{k \in \mathbb{Z}}, (N_{k})_{k \in \mathbb{Z}}, (F_{k})_{k \in \mathbb{Z}}$ and $(T_{k})_{k \in \mathbb{Z}}$ are an $B_{p,q}^{s}$-multipliers. Since $D_{k}S_{k} -  AD_{k}N_{k} - T_{k} - \tilde{a}(ik)N_{k} = I$ (because   $( (ikD_{k} -AD_{k}- G_{k}- F_{k})N_{k} = I),$ we deduce $AD_{k}N_{k}$ and $D_{k}S_{k}$ are also an $B^{s}_{p,q}$-multiplicateur.\\
Now let $f \in B^{s}_{p,q}(\mathbb{T}, X)$. Then there exist $u, v, w, q, x \in  B^{s}_{p,q}(\mathbb{T}, X)$, such that\\
$\hat{u}(k) = N_{k}\hat{f}(k),  \hat{v}(k) = D_{k}S_{k}\hat{f}(k), \hat{w}(k) = T_{k}\hat{f}(k), \hat{x}(k) = F_{k}\hat{f}(k)$ and $\hat{q}(k) = AD_{k}N_{k}\hat{f}(k)$ for all $k \in \mathbb{Z}$. So,
We have $(\hat{u}(k)-L_{k}\hat{u}(k)) \in D(A)$ and $A(\hat{u}(k)-L_{k}\hat{u}(k)) = \hat{q}(k)$ for all $k \in \mathbb{Z}$, we deduce that  $(u(t)-L(u_{t})) \in D(A)$. On the other hand $\exists v \in  B^{s}_{p,q}(\mathbb{T}, X)$ such that $\hat{v}(k) = D_{k}S_{k}\hat{f}(k)=ikD_{k}N_{k}\hat{f}(k)=ikD_{k}\hat{u}(k)$.  Then we obtain $(Du_{t})' = v(t)$ a.e. Since  $Du_{t} \in B^{s+1}_{p,q}(\mathbb{T}, X)$.\\
We have $\widehat{(Du_{t})'}(k) = ikD_{k}\hat{u}(k), (A(u(.) - L(u_{.}) )^{\wedge}(k)= AD_{k}\hat{u}(k), \widehat{Gu_{.}}(k)= G_{k}\hat{u}(k)$ and\\
$\widehat{\int_{- \infty}^{t}a(t-s)u(s)ds}(k) = \tilde{a}(ik) \hat{u}(k)$ for all $k \in \mathbb{Z}$, It follows from the identity 
$$ikD_{k}N_{k} - AD_{k} N_{k} - G_{k}N_{k} - \tilde{a}(ik)N_{k} = I$$ that
\begin{center}
$(u(t)-L(u_{t}))' = A(u(t) - L(u_{t})) + G(u_{t})+ \int_{- \infty}^{t}a(t-s)u(s)ds + f(t)$
\end{center}
For the uniqueness we suppose two solutions $u_{1}$ and $u_{2}$, then  $u = u_{1} - u_{2}$ is  strong $L^{p}$-solution of equation (\ref{e1}) corresponding to the function $f = 0$, taking Fourier transform, we get $(ikD_{k} - AD_{k} - G_{k} - \tilde{a}(ik))\hat{u}(k) = 0$, which implies that $\hat{u}(k) = 0$ for all $k \in \mathbb{Z}$ and  $u(t) = 0$. Then $u_{1} = u_{2}$. The proof is completed.
\end{proof}


\section{Conclusion}
We are obtained necessary and sufficient conditions to guarantee existence and uniqueness of periodic solutions to the equation $\bf \frac{d}{dt}[x(t) - L(x_{t})]= A[x(t)- L(x_{t})]+G(x_{t})+ \int_{- \infty}^{t}a(t-s)x(s)ds+f(t)$ in terms of either the M-boundedness of the modified resolvent operator determined by the equation. Our results are obtained in the  Besov space.

\scriptsize\----------------------------------------------------------------------------------------------------------------------------------------\\\copyright
{\it{Copyright International Knowledge Press. All rights reserved.}}

\end{document}